\newtheorem{theorem}{Theorem}[section]
\newtheorem{lemma}[theorem]{Lemma}
\newtheorem{corollary}[theorem]{Corollary}
\newcommand{\n}[1]{||#1||}
\theoremstyle{definition}
\newtheorem{definition}[theorem]{Definition}
\newtheorem{example}[theorem]{Example}
\newtheorem*{solution*}{Solution}
\theoremstyle{remark}
\newtheorem{remark}[theorem]{Remark}
\newtheorem{claim}[theorem]{Claim}
\newtheorem*{pf*}{Pf}
\newtheorem*{pfbase*}{Proof of Base Case}
\newtheorem*{pfstep*}{Proof of Inductive Step}
\numberwithin{equation}{section}
\newcommand{\C}{\mathbb{C}}
\newcommand{\Z}{\mathbb{Z}}
\newcommand{\U}{\mathscr{U}}
\newcommand{\M}{\mathscr{M}}
\begin{document}

\allowdisplaybreaks

\title[$G$-Invariant Spaces of Functions]{Spaces of Continuous and Measurable Functions Invariant Under a Group Action}

\author{Samuel A. Hokamp}
\address{Department of Mathematics, Sterling College, Sterling, Kansas, 67579}
\email{samuel.hokamp@sterling.edu}

\subjclass[2010]{Primary 46E30. Secondary 32A70}

\date{\today}

\keywords{Spaces of continuous functions, group actions, functional analysis.\\\indent\emph{Corresponding author.} Samuel A. Hokamp \Letter~\href{mailto:samuel.hokamp@sterling.edu}{samuel.hokamp@sterling.edu}. \phone~920-634-7356.}

\begin{abstract}
In this paper we characterize spaces of continuous and $L^p$-functions on a compact Hausdorff space that are invariant under a transitive and continuous group action. This work generalizes Nagel and Rudin's 1976 results concerning unitarily and M\"obius invariant spaces of continuous and measurable functions defined on the unit sphere in $\C^n$.
\end{abstract}

\maketitle

\section{Introduction}

The idea for this paper came from the realization that much of Nagel and Rudin's work characterizing unitarily invariant spaces of continuous and measurable functions on the unit sphere of $\mathbb{C}^n$ (originally found in \cite{NR} and summarized in \cite{RFT}) could be generalized to spaces of continuous and measurable functions on a compact Hausdorff space $X$, which are invariant under the continuous and transitive action of a compact group $G$ on $X$. 


A space of complex functions on $X$ is \textit{$G$-invariant} if the pre-composition of any function in the set with the action of each element of $G$ on $X$ remains in the set. A $G$-invariant space is \textit{$G$-minimal} if it contains no proper $G$-invariant subspace. Our main result (Theorem \ref{main result}) yields that a collection of closed $G$-minimal spaces of continuous functions satisfying certain conditions suffices to characterize all closed $G$-invariant spaces of continuous functions on $X$: each closed $G$-invariant space is the closure of the direct sum of a unique subcollection of the $G$-minimal spaces. 

A unique regular Borel probability measure $\mu$ on $X$ that is $G$-invariant in the sense that $$\int_X f\,d\mu=\int_X f(\alpha x)\,d\mu(x),$$ for every continuous function $f$ on $X$ and every $\alpha\in G$ is necessary to define the conditions for the collection of closed $G$-minimal spaces. Existence of such a measure is due to Andr\'e Weil in \cite{weil1940}. Additionally, Theorem \ref{main result} shows each closed $G$-invariant space of \textit{measurable} functions with respect to $\mu$ is characterized by the same collection of $G$-minimal spaces.

In Section \ref{G Collections}, we define the conditions which a collection of $G$-minimal spaces must have in order to induce the closed $G$-invariant spaces of continuous and measurable functions. In Section \ref{Main Result Section}, we prove our main result, Theorem \ref{main result}. Section \ref{lemmas} is devoted to the proofs of Lemma \ref{Lemma2} and Lemma \ref{Lemma3}, which are used in establishing Theorem \ref{main result}.

\section{Preliminaries} 


Let $X$ be a compact Hausdorff space and $C(X)$ the space of continuous complex functions with domain $X$. Let $G$ be a compact group that acts continuously and transitively on $X$. When we wish to be explicit, the map $\varphi_\alpha:X\to X$ shall denote the action of $\alpha$ on $X$ for each $\alpha\in G$; otherwise, $\alpha x$ denotes the action of $\alpha\in G$ on $x\in X$.

Let $\mu$ denote the unique regular Borel probability measure on $X$ that is invariant under the action of $G$. Specifically,
\begin{equation}\label{measinv}
    \int_X f\,d\mu=\int_X f\circ\varphi_\alpha\,d\mu,
\end{equation}
for all $f\in C(X)$ and $\alpha\in G$. The existence of such a measure is a result of Andr\'e Weil from \cite{weil1940}. A construction of $\mu$ can be found in \cite{joys} (Theorem~6.2), but existence can be established using the Riesz Representation Theorem (for reference, Theorem 6.19 \cite{RRC}). Throughout the paper, $\mu$ shall refer to this measure.

The notation $L^p(\mu)$ denotes the usual Lebesgue spaces, for $1\leq p\leq\infty$. For $Y\subset C(X)$, the uniform closure of $Y$ is denoted $\overline{Y}$, and for $Y\subset L^p(\mu)$, the norm-closure of $Y$ in $L^p(\mu)$ is denoted $\overline{Y}^{p}$.

The following is an easy consequence of \eqref{measinv}:

\begin{remark}\label{switcher-mu}
Let $1\leq p<\infty$ and let $p^\prime$ be its conjugate exponent. Then $$\int_X (f\circ\varphi_\alpha)\cdot g\,d\mu=\int_X f\cdot (g\circ\varphi_{\alpha^{-1}})\,d\mu,$$ for $f\in L^p(\mu)$, $g\in L^{p^\prime}(\mu)$, and $\alpha\in G$.
\end{remark}

The following definitions are generalizations of definitions found in \cite{RFT} related to the unitary group. These more specific definitions are given as references.

\begin{definition}[12.2.4 \cite{RFT}]
A space of complex functions $Y$ defined on $X$ is \textbf{invariant under $G$} (\textbf{$G$-invariant}) if $f\circ \varphi_\alpha\in Y$ for every $f\in Y$ and every $\alpha\in G$.
\end{definition}

\begin{remark}
Since the action is continuous, $C(X)$ is $G$-invariant. Conversely, if $C(X)$ is $G$-invariant, then each action $\varphi_\alpha$ must be continuous.
\end{remark}

\begin{remark}
Explicitly, the invariance property \eqref{measinv} means $\mu(\alpha E)=\mu(E)$ for every Borel set $E$ and every $\alpha\in G$. Consequently, \eqref{measinv} holds for every $L^p$ function, and thus $L^p(\mu)$ is $G$-invariant for all $1\leq p\leq\infty$.
\end{remark}

\begin{definition}[12.2.4 \cite{RFT}]
If $Y$ is $G$-invariant and $T$ is a linear transformation on $Y$, we say $T$ \textbf{commutes with} $G$ if $$T(f\circ\varphi_\alpha)=(Tf)\circ\varphi_\alpha$$ for every $f\in Y$ and every $\alpha\in G$.
\end{definition}

\begin{definition}[12.2.8 \cite{RFT}]
A space $Y\subset C(X)$ is \textbf{$G$-minimal} if it is $G$-invariant and contains no nontrivial $G$-invariant spaces.
\end{definition}

\begin{example}\label{ex part1}
To illustrate these definitions, let $X=G=T^n$, the torus in $\C^n$, such that the action of $G$ on $X$ is given by coordinatewise multiplication. This action is both transitive and continuous.

For each $k=(k_1,k_2,\ldots,k_n)\in\Z^n$, we define $H_k$ to be the space of all complex functions $f$ on $T^n$ given by $f(z)=cz^k$, where $c\in\C$ and $z^k=z_1^{k_1} z_2^{k_2}\ldots z_n^{k_n}$; that is, $H_k$ is the span of the trigonometric monomial of power $k$.

Observe that $\dim H_k=1$, so that each $H_k$ is closed. Further, $G$-invariance of each $H_k$ is clear, and thus each $H_k$ is $G$-minimal.
\end{example}

Finally, the classical results used in this paper can be found in many texts, with the reference given being one such place.

\section{\texorpdfstring{$G$}{G}-Collections}\label{G Collections}

In this section, we introduce the notion of a \textit{$G$-collection}, a collection of closed $G$-invariant spaces which characterize all closed $G$-invariant subspaces of $C(X)$ (Definition \ref{G-col def1}). However, we must first give Definition \ref{H(x)spaces}. The particular case of the unitary group acting on the unit sphere in $\mathbb{C}^n$ described in \cite{RFT} inherently satisfies Definition \ref{G-col def1}($*$), so has no need of the following definition, but this is not necessarily true in general for $G$ acting on $X$. 

\begin{definition}\label{H(x)spaces}
For each $x\in X$, the space $H(x)$ is the set of all continuous functions that are unchanged by the action of any element of $G$ which stabilizes $x$. That is, $$H(x)=\{f\in C(X):f=f\circ\varphi_\alpha,\text{ for all }\alpha\in G\text{ such that }\alpha x=x\}.$$
\end{definition}

\begin{definition}\label{G-col def1}
Let $\mathscr{G}$ be a collection of spaces in $C(X)$ with the following properties:
\begin{itemize}
    \item[(1)] Each $H\in\mathscr{G}$ is a closed $G$-minimal space.
    \item[(2)] Each pair $H_1$ and $H_2$ in $\mathscr{G}$ is orthogonal (in $L^2(\mu)$): If $f_1\in H_1$ and $f_2\in H_2$, then $$\int_X f_1\bar{f_2}\,d\mu=0.$$
    \item[(3)] $L^2(\mu)$ is the direct sum of the spaces in $\mathscr{G}$.
\end{itemize}
We say $\mathscr{G}$ is a \textbf{$G$-collection} if it also possesses the following property:
\begin{itemize}
    \item[($*$)] $\dim (H\cap H(x))=1$ for each $x\in X$ and each $H\in\mathscr{G}$.
\end{itemize}
\end{definition}



\begin{remark}
A collection of spaces in $C(X)$ lacking at most only property $(*)$ of Definition~\ref{G-col def1} always exists, as a consequence of the Peter-Weyl theorem from \cite{peterweyl}. This collection is necessarily unique.
\end{remark}

\begin{remark}
Explicitly, Definition \ref{G-col def1}(3) requires each $f\in L^2(\mu)$ to have a unique expansion $f=\sum f_i$, with $f_i\in H_i$, that converges unconditionally to $f$ in the $L^2$-norm.
\end{remark}

Throughout the remainder of the paper, we assume that a $G$-collection $\mathscr{G}$ exists for $X$, indexed by $I$. The rest of this section is devoted to establishing results related to $\mathscr{G}$ and its elements $H_i$, beginning with the following theorem, which is a generalization of Theorem 12.2.5 of \cite{RFT}. Note that we use $[\cdot,\cdot]$ to denote the inner product on $L^2(\mu)$: $$[f,g]=\int_X f\bar{g}\,d\mu.$$

\begin{theorem}\label{proj kernels}
Suppose $H$ is a closed $G$-invariant subspace of $C(X)$, and $\pi$ is the orthogonal projection of $L^2(\mu)$ onto $H$. Then, $\pi$ commutes with $G$, and to each $x\in X$ corresponds a unique $K_x\in H$ such that 
\begin{equation}\label{eq2}
(\pi f)(x)=[f,K_x]\hspace{.25in}(f\in L^2(\mu)).
\end{equation}
Additionally, the functions $K_x$ satisfy the following:
\begin{itemize}
    \item[(1)] $\displaystyle K_x(y)=\overline{K_y(x)}$\hspace{.5in}$(x,y\in X)$,
    \item[(2)] $\displaystyle\pi f=\int_X f(x)K_x\,d\mu(x)$\hspace{.5in}$(f\in L^2(\mu))$,
    \item[(3)] $K_{\varphi_\alpha( x)}=K_x\circ\varphi_{\alpha^{-1}}$\hspace{.5in}$(\alpha\in G)$,
    \item[(4)] $K_x=K_x\circ\varphi_\alpha$, for all $\alpha\in G$ such that $\alpha x=x$, and
    \item[(5)] $K_x(x)=K_y(y)>0$\hspace{.5in}$(x,y\in X)$.
\end{itemize}
\end{theorem}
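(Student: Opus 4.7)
The plan is to prove the commutation first, then to produce the kernel $K_x$ by applying the Riesz representation theorem to the evaluation-after-projection functional, and finally to read off (1)--(5) from the defining formula together with the commutation and the transitivity of the $G$-action. For commutation, fix $\alpha\in G$ and observe that the composition operator $L_\alpha: L^2(\mu)\to L^2(\mu)$, $L_\alpha f = f\circ\varphi_\alpha$, is a surjective linear isometry (hence unitary) by \eqref{measinv}. Because $H$ is $G$-invariant, $L_\alpha(H)\subseteq H$, with equality after replacing $\alpha$ by $\alpha^{-1}$, and unitarity then forces $L_\alpha(H^\perp)=H^\perp$. Since $\pi$ is uniquely determined by the decomposition $L^2(\mu)=H\oplus H^\perp$, this gives $L_\alpha\pi=\pi L_\alpha$.

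For the kernel, fix $x\in X$ and consider the linear functional $\Lambda_x:L^2(\mu)\to\C$ defined by $\Lambda_x f=(\pi f)(x)$. The main obstacle, in my view, is verifying that $\Lambda_x$ is bounded with respect to the $L^2$-norm, which is what makes Riesz representation applicable here; the evaluation $g\mapsto g(x)$ is certainly bounded on $H$ in the uniform norm but not obviously so in the $L^2$-norm. I would establish $L^2$-boundedness by invoking the Peter--Weyl / $G$-collection structure: decomposing $H$ into its finite-dimensional $G$-minimal summands $H_i\in\mathscr{G}$, evaluation is automatically bounded on each $H_i$, and transitivity of $G$ together with property $(*)$ of Definition~\ref{G-col def1} provides the uniform control in $x$ and the summability across the index set needed to assemble the global bound. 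Once $\Lambda_x$ is bounded, Riesz yields $k_x\in L^2(\mu)$ with $\Lambda_x f=[f,k_x]$; setting $K_x=\pi k_x\in H$ produces \eqref{eq2} since $[f,K_x]=[\pi f,k_x]=(\pi^{2}f)(x)=(\pi f)(x)$. Uniqueness of $K_x$ within $H$ is immediate from nondegeneracy of $[\cdot,\cdot]$ restricted to $H$.

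For the individual properties: (1) follows because $K_x\in H$ gives $K_x(y)=(\pi K_x)(y)=[K_x,K_y]=\overline{[K_y,K_x]}=\overline{K_y(x)}$. For (2), testing against arbitrary $g\in L^2(\mu)$ and using Fubini together with (1) yields
$$
\Bigl[\int_X f(x) K_x\, d\mu(x),\; g\Bigr]=\int_X f(x)\overline{(\pi g)(x)}\, d\mu(x)=[f,\pi g]=[\pi f,g],
$$
so the integral equals $\pi f$. For (3), the commutation from the first paragraph combined with Remark \ref{switcher-mu} gives
$$
[f,K_{\varphi_\alpha x}]=(\pi f)(\varphi_\alpha x)=(\pi L_\alpha f)(x)=[L_\alpha f,K_x]=[f,K_x\circ\varphi_{\alpha^{-1}}],
$$
which forces $K_{\varphi_\alpha x}=K_x\circ\varphi_{\alpha^{-1}}$ since $K_x\circ\varphi_{\alpha^{-1}}\in H$ by $G$-invariance. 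Property (4) is (3) specialized to $\alpha$ with $\alpha x=x$ (and the observation that $\alpha^{-1}x=x$ as well, which matches the direction stated). Finally, for (5), positivity follows from $K_x(x)=(\pi K_x)(x)=[K_x,K_x]=\|K_x\|_2^2\ge 0$, constancy in $x$ follows from (3) together with transitivity of $G$ on $X$, and strict positivity at any (hence every) $x$ is equivalent to $H\neq\{0\}$, which is the natural nontriviality hypothesis in the reproducing-kernel setting.
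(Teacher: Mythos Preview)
Your overall strategy---commutation via $G$-invariance of $H^\perp$, then Riesz representation for the evaluation-after-projection functional, then direct verification of (1)--(5)---is exactly the paper's approach, and your arguments for (1)--(5) match the paper's essentially line for line.

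The one substantive divergence is your handling of the boundedness of $\Lambda_x f=(\pi f)(x)$. You are right to flag this as an issue; the paper simply asserts it without comment. But your proposed fix via the $G$-collection $\mathscr{G}$ and property $(*)$ is both circular and insufficient. Circular, because decomposing an \emph{arbitrary} closed $G$-invariant $H\subset C(X)$ into members of $\mathscr{G}$ is precisely the content of Theorems~\ref{L2 result} and~\ref{main result}, whose proofs rely (through the intermediate results of Section~\ref{G Collections}) on the present theorem. Insufficient, because even granting a decomposition $H=\overline{\bigoplus_{i\in\Omega} H_i}$ with each $H_i$ finite-dimensional, property $(*)$ gives you no summability of the individual kernel norms $K^{(i)}_x(x)$ across $i\in\Omega$; indeed such summability fails whenever $H$ is $L^2$-dense, so the argument as sketched cannot close.

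The clean justification is much simpler and uses only the stated hypotheses: the assumption that $\pi$ is the orthogonal projection of $L^2(\mu)$ onto $H$ already forces $H$ to be closed in $L^2(\mu)$. Since $H$ is by hypothesis also closed in $C(X)$ and $\|\cdot\|_2\le\|\cdot\|_\infty$ on a probability space, the open mapping theorem applied to the identity $(H,\|\cdot\|_\infty)\to(H,\|\cdot\|_2)$ produces a constant $C$ with $\|g\|_\infty\le C\|g\|_2$ for all $g\in H$. Then
\[
|\Lambda_x f|=|(\pi f)(x)|\le\|\pi f\|_\infty\le C\|\pi f\|_2\le C\|f\|_2,
\]
which is the bound you need.
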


\begin{proof}
The projection $\pi$ commutes with $G$ due to the $G$-invariance of $H^\perp$, which follows from Corollary \ref{switcher-mu}. The existence and uniqueness of $K_x$ follows from the fact that $f\mapsto(\pi f)(x)$ is a bounded linear functional on $L^2(\mu)$. Further, $K_x\in H$ since $\pi f=0$ whenever $f\perp H$. When $f\in H$, we get $$f(x)=[f,K_x].$$ In particular, $K_y(x)=[K_y,K_x]$, which proves (1), and (2) follows naturally. Since $\pi$ commutes with $G$, $$[f,K_{\varphi_\alpha(x)}]=(\pi f)(\varphi_\alpha(x))=\pi(f\circ\varphi_\alpha)(x)=[f\circ\varphi_\alpha,K_x]=[f,K_x\circ\varphi_{\alpha^{-1}}],$$ for every $f\in L^2(\mu)$ (Corollary \ref{switcher-mu} yields the last equality). This proves (3) and the special case (4). Finally, (3) also yields $$K_{\varphi_\alpha(x)}(\varphi_\alpha(x))=(K_x\circ\varphi_{\alpha^{-1}})(\varphi_\alpha(x))=K_x(x).$$ This and the transitivity of the group action yields (5), with the inequality due to $$K_x(x)=[K_x,K_x]>0.$$
\end{proof}

\begin{remark}
Theorem~\ref{proj kernels}(4) yields that $\dim (H(x)\cap H_i)\geq 1$ for each $x\in X$ and $i\in I$, so that Definition \ref{G-col def1}($*$) requires each $H_i$ to contain a unique (up to a constant multiple) function which satisfies Theorem~\ref{proj kernels}(4) for each $x\in X$.
\end{remark}

\begin{definition}
We define $\pi_i$ to be the projection of $L^2(\mu)$ onto $H_i$. The domain of each $\pi_i$ is extended to $L^1(\mu)$ by Theorem \ref{proj kernels}(2). 
\end{definition}

\begin{definition}
If $\Omega\subset I$, we define $E_\Omega$ to be the direct sum of the spaces $H_i$ for $i\in\Omega$. 
\end{definition}

\begin{theorem}
Suppose $T:H_i\to H_j$ is linear and commutes with $G$. When $i=j$, $T$ is the identity on $H_i$ scaled by a constant $c$. Otherwise, $T=0$.
\end{theorem}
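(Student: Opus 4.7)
The strategy is to exploit the reproducing kernels from Theorem~\ref{proj kernels} applied separately to $H_i$ and $H_j$; denote these by $K^i_x\in H_i$ and $K^j_x\in H_j$. The key step is to show that $TK^i_x$ is a single constant multiple of $K^j_x$, and then extend this to all of $H_i$ via the integral representation of Theorem~\ref{proj kernels}(2).

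First, I would show that $TK^i_x\in H_j\cap H(x)$: any $\alpha\in G$ fixing $x$ satisfies
$$(TK^i_x)\circ\varphi_\alpha=T(K^i_x\circ\varphi_\alpha)=TK^i_x,$$
by Theorem~\ref{proj kernels}(4) and the $G$-equivariance of $T$. Property $(*)$ from Definition~\ref{G-col def1} then forces $TK^i_x=c(x)K^j_x$ for a unique scalar $c(x)\in\C$. Using Theorem~\ref{proj kernels}(3) together with the $G$-equivariance of $T$,
$$c(\alpha x)K^j_{\alpha x}=TK^i_{\alpha x}=T(K^i_x\circ\varphi_{\alpha^{-1}})=(TK^i_x)\circ\varphi_{\alpha^{-1}}=c(x)K^j_{\alpha x},$$
so $c(\alpha x)=c(x)$ for every $\alpha\in G$, and by transitivity $c$ is a single constant on $X$.

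Next, since $H_i$ is a continuous irreducible representation of the compact group $G$, it is finite-dimensional by Peter--Weyl, so $T$ is automatically continuous and commutes with integration. For any $f\in H_i$, Theorem~\ref{proj kernels}(2) gives $f=\pi_i f=\int_X f(x)K^i_x\,d\mu(x)$, and hence
$$Tf=c\int_X f(x)K^j_x\,d\mu(x)=c\,\pi_j f.$$
When $i=j$, $\pi_j f=f$ yields $Tf=cf$; when $i\neq j$, the orthogonality in Definition~\ref{G-col def1}(2) gives $\pi_j f=0$, so $Tf=0$. The main subtlety is justifying the interchange of $T$ with the integral, which is precisely what the finite-dimensionality of $H_i$ takes care of; the essential geometric input is condition $(*)$, since without it $TK^i_x$ would not be forced to lie on a single line.
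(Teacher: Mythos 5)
Your proposal is correct and follows essentially the same route as the paper: use property $(*)$ to get $TK^i_x=c(x)K^j_x$, use Theorem~\ref{proj kernels}(3) and transitivity to make $c$ constant, then push $T$ through the integral representation of Theorem~\ref{proj kernels}(2) to land on $c\,\pi_j$. Your only addition is the explicit finite-dimensionality justification for interchanging $T$ with the integral, which the paper leaves implicit.
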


\begin{proof}
For each $x\in X$, let $K_x$ denote the kernel of Theorem \ref{proj kernels} in $H_i$ and $L_x$ the same in $H_j$. Then, if $\alpha\in G$ such that $\alpha x=x$, because $T$ commutes with $G$, we get $$TK_x=T(K_x\circ\varphi_\alpha)=(TK_x)\circ\varphi_\alpha.$$ Thus, by Definition \ref{G-col def1}($*$), $TK_x=c(x)L_x$ for some constant $c(x)$, and hence $$(TK_x)(x)=c(x)L_x(x).$$ Observe that $L_x(x)$ is independent of $x$. Further, if $y=\alpha x$, then $$(TK_y)(y)=(TK_x\circ\varphi_{\alpha^{-1}})(\alpha x)=(TK_x)(x).$$ Thus, $c(x)=c$ is the same constant for all $x\in X$.

If $f\in H_i$, we then get $$f=\int_X f(x)K_x\,d\mu(x).$$ Application of $T$ yields $$Tf=\int_X f(x)TK_x\,d\mu(x)=c\int_X f(x)L_x\,d\mu(x)=c\pi_j f.$$ When $i=j$, then $\pi_j f=f$ for all $f\in H_i$. When $i\neq j$, then $\pi_j f=0$ for all $f\in H_i$.
\end{proof}

\section{Characterization of Closed \texorpdfstring{$G$}{G}-Invariant Spaces}\label{Main Result Section}

We now prove our main result, Theorem \ref{main result}. Throughout the section, we let $\mathscr{X}$ denote any of the spaces $C(X)$ or $L^p(\mu)$ for $1\leq p<\infty$.

\begin{theorem}\label{main result}
If $Y$ is a closed $G$-invariant subspace of $\mathscr{X}$, then $Y$ is the closure of the direct sum of some subcollection of $\mathscr{G}$.
\end{theorem}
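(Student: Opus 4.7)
The plan is to set $\Omega := \{i \in I : H_i \subset Y\}$ and prove that $Y$ equals the closure of $E_\Omega$ in $\mathscr{X}$. One inclusion is immediate: each $H_i$ with $i \in \Omega$ lies in $Y$, so $E_\Omega \subset Y$; since $Y$ is closed in $\mathscr{X}$, the closure of $E_\Omega$ in $\mathscr{X}$ also lies in $Y$. For the reverse inclusion, the subcollection $\{H_i : i \in \Omega\}$ must be shown to generate a dense subspace of $Y$.

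The argument hinges on the forthcoming Lemma \ref{Lemma2} and Lemma \ref{Lemma3}. I expect Lemma \ref{Lemma2} to assert that for each $f \in \mathscr{X}$ the series $\sum_{i \in I} \pi_i f$ converges to $f$ in the norm of $\mathscr{X}$, and Lemma \ref{Lemma3} to assert that if $f \in Y$ and $\pi_i f \neq 0$ then $H_i \subset Y$. Granting these, take $f \in Y$: Lemma \ref{Lemma3} forces every index $i$ with $\pi_i f \neq 0$ to lie in $\Omega$, while Lemma \ref{Lemma2} gives $f = \sum_{i \in \Omega} \pi_i f$ in the $\mathscr{X}$-norm, so $f$ lies in the closure of $E_\Omega$. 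Combined with the trivial inclusion, this yields the theorem and identifies $\{H_i : i \in \Omega\} \subset \mathscr{G}$ as the desired subcollection.

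The real work is thus pushed into Lemmas \ref{Lemma2} and \ref{Lemma3}. Lemma \ref{Lemma3} appears to be the structural heart: given $f \in Y$, one must first produce $\pi_i f$ itself as an element of $Y$, using only operations that respect the $G$-invariance and closedness of $Y$. The natural device is Theorem \ref{proj kernels}(2)--(3): writing $(\pi_i f)(x) = \int_X f(y)\overline{K_x(y)}\,d\mu(y)$ and substituting $K_x = K_{x_0}\circ\varphi_{\alpha^{-1}}$ for any $\alpha$ with $\alpha x_0 = x$ expresses $\pi_i f$ as an average of $G$-translates of $f$ paired against a single fixed element of $H_i \cap H(x_0)$; Haar integration over $G$ together with the closedness and $G$-invariance of $Y$ should then place $\pi_i f$ in $Y$. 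Once $\pi_i f \in Y \cap H_i$ is nonzero, $G$-minimality of $H_i$ and the $G$-invariance of $Y \cap H_i$ upgrade this to $H_i \subset Y$.

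Lemma \ref{Lemma2} is the convergence step, and is where I expect the main technical obstacle to sit. For $\mathscr{X} = L^2(\mu)$ it is exactly Definition \ref{G-col def1}(3), but for $C(X)$ and for $L^p(\mu)$ with $p \neq 2$ it demands a genuinely new argument, presumably a Peter-Weyl style approximation of $f$ by finite $G$-averages combined with uniform norm bounds on the partial sums of $\sum \pi_i f$. Once both lemmas are in hand, Theorem \ref{main result} follows in the two short steps above, and the routine verification that distinct subcollections of $\mathscr{G}$ yield distinct closures (from Definition \ref{G-col def1}(2)) can be added as a remark if uniqueness of $\Omega$ is desired.
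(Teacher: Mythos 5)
There is a genuine gap, and it sits exactly where you predicted the ``main technical obstacle'' would be: the version of Lemma \ref{Lemma2} you posit --- that $\sum_{i\in I}\pi_i f$ converges to $f$ in the norm of $\mathscr{X}$ --- is false for $\mathscr{X}=C(X)$ and for $L^p(\mu)$ with $p\neq 2$. The paper's own Example \ref{ex part1} already refutes it: for $X=G=T^1$ the projections $\pi_k f=\hat f(k)z^k$ are the Fourier coefficients, and it is classical that the Fourier series of a continuous function need not converge uniformly, nor that of an $L^1$ function in $L^1$-norm. So the step ``$f=\sum_{i\in\Omega}\pi_i f$ in the $\mathscr{X}$-norm, hence $f$ lies in the closure of $E_\Omega$'' cannot be carried out as stated, and no Peter--Weyl-style refinement will rescue termwise norm convergence of the orthogonal expansion outside $L^2$.

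The actual lemmas in the paper do something different and weaker, which is enough. Lemma \ref{Lemma2} states that $Y\cap C(X)$ is dense in $Y$; it is proved by convolving $f\in Y$ against an approximate identity $\psi$ on $G$ (Haar measure), producing $g=\int_G\psi(\alpha)\,f\circ\varphi_\alpha\,dm(\alpha)\in Y\cap C(X)$ with $\|f-g\|<\epsilon$ --- a $G$-average of translates, not a partial sum of the decomposition. Lemma \ref{Lemma3} is a duality statement: for a $G$-invariant $Y\subset C(X)$, a continuous function outside the uniform closure of $Y$ is also outside its $L^2$-closure (proved by smoothing an annihilating measure into an $L^2$-bounded functional). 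The main theorem then passes to $\tilde Y$, the $L^2$-closure of $Y\cap C(X)$, applies the $L^2$ case (Theorem \ref{L2 result}, where unconditional convergence \emph{is} available from Definition \ref{G-col def1}(3)), and uses Lemma \ref{Lemma3} to identify $Y\cap C(X)$ with the \emph{uniform} closure of $E_\Omega$; since uniform convergence implies convergence in every $\mathscr{X}$ (as $\mu$ is a probability measure), density of $Y\cap C(X)$ in $Y$ finishes the argument. Your identification of $\Omega$ and the easy inclusion are fine (the paper's $\Omega=\{i:\pi_iY\neq\{0\}\}$ coincides with yours in the end), and your sketch that $\pi_i f\in Y$ forces $H_i\subset Y$ is in the right spirit, but the approximation half of your argument needs to be replaced by the smoothing-plus-duality route.
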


The proof of Theorem \ref{main result} relies on the particular case when $\mathscr{X}$ is the space $L^2(\mu)$ (Theorem \ref{L2 result}), as well as Lemma \ref{Lemma2} and Lemma \ref{Lemma3}, which allow the passage from $L^2(\mu)$ to the other spaces. These lemmas are proved in Section \ref{lemmas}.

\begin{theorem}\label{L2 result}
If $Y$ is a closed $G$-invariant subspace of $L^2(\mu)$, 
then $Y$ is the $L^2$-closure of the direct sum of some subcollection of $\mathscr{G}$.
\end{theorem}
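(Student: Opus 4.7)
The plan is to identify $\Omega := \{i \in I : H_i \subset Y\}$ and prove $Y = \overline{\bigoplus_{i \in \Omega} H_i}^2$. The inclusion $\supset$ is immediate from the closedness of $Y$, so the whole content of the theorem reduces to the dichotomy: for each $i \in I$, either $H_i \subset Y$ or $H_i \perp Y$ in $L^2(\mu)$. Once that is in place, Definition \ref{G-col def1}(3) gives, for every $f \in Y$, an $L^2$-convergent expansion $f = \sum_i \pi_i f$; the terms with $i \notin \Omega$ vanish by the dichotomy, so $f$ belongs to the $L^2$-closure of $\bigoplus_{i \in \Omega} H_i$, which finishes the theorem.

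To set up the dichotomy I will suppose $\pi_i(Y) \neq 0$ and deduce $H_i \subset Y$. Let $N := Y \cap H_i^\perp$ and $W := Y \ominus N$; both are closed and $G$-invariant, since the $G$-action is unitary on $L^2(\mu)$ (Remark \ref{switcher-mu}) and therefore respects orthogonal complements inside $Y$. By construction $\pi_i|_W$ is injective (its kernel is $W \cap N = 0$), and $\pi_i(W) = \pi_i(Y)$ is a nonzero $G$-invariant subspace of $H_i$ (using that $\pi_i$ commutes with $G$, Theorem \ref{proj kernels}). Because $H_i$ is $G$-minimal, this forces $\pi_i(W) = H_i$. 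Thus $\pi_i|_W : W \to H_i$ is a bijective, bounded, $G$-equivariant linear map whose inverse $\phi : H_i \to W \subset L^2(\mu)$ is bounded by the open mapping theorem and also $G$-equivariant.

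The finishing move is the Schur-type rigidity proved immediately before the theorem. For each $j \in I$, the composition $\pi_j \circ \phi : H_i \to H_j$ commutes with $G$; by that rigidity it is zero for $j \neq i$, and a scalar multiple of $\mathrm{id}_{H_i}$ for $j = i$. Since $\pi_i \circ \phi = \mathrm{id}_{H_i}$ by construction, the scalar is $1$. Expanding $\phi(f) = \sum_j \pi_j \phi(f)$ in $L^2(\mu)$ therefore collapses to $\phi(f) = f$, so $\phi$ is the inclusion $H_i \hookrightarrow L^2(\mu)$ and $H_i = W \subset Y$, as required.

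I expect the main difficulty to be the construction of $W$ together with the $G$-equivariant bounded inverse $\phi$ in a form that makes the Schur-type rigidity directly applicable; in particular, I need to leverage the purely algebraic form of $G$-minimality to conclude that $\pi_i(Y)$ is honestly all of $H_i$ as soon as it is nonzero, rather than merely a proper $G$-invariant subspace, before the component-by-component analysis of $\phi$ via $\pi_j \circ \phi$ can be carried out.
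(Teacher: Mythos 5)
Your proposal is correct and follows essentially the same route as the paper: your $N$ and $W$ are the paper's $Y_0$ and $Y_1$, your $\phi$ is its $\Lambda$, and the decisive step in both is applying the Schur-type rigidity theorem to $\pi_j\circ\phi$ to kill the components with $j\neq i$ and conclude $W=H_i\subset Y$. The only cosmetic difference is that you define $\Omega$ as $\{i: H_i\subset Y\}$ rather than $\{i:\pi_i Y\neq\{0\}\}$, but the dichotomy you prove shows these coincide.
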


\begin{proof}
Define the set $\Omega=\{i\in I:\pi_i Y\neq\{0\}\}$ and let $i\in\Omega$. Since $Y$ is $G$-invariant and $\pi_i$ commutes with $G$, $\pi_i Y$ is a nontrivial $G$-invariant space in $H_i$. The $G$-minimality of $H_i$ then yields that $\pi_i Y=H_i$.

Let $Y_0$ be the null space of $\pi_i$ in $Y$, with relative orthogonal complement $Y_1$. Then $Y_0$ is $G$-invariant, and so is $Y_1$. Further, $\pi_i:Y_1\to H_i$ is an isomorphism, whose inverse we denote $\Lambda$. If we fix $j\in I$ such that $j\neq i$ and define $T=\pi_j\circ\Lambda$, then $T$ maps $H_i$ into $H_j$ and commutes with $G$. Thus, $T=0$.

We conclude that $\pi_jY_1=\{0\}$ for all $j\neq i$, and thus $Y_1=H_i$. Thus, $H_i\subset Y$ for all $i\in\Omega$, and further, $\overline{E}_\Omega^2\subset Y$. Since $\pi_j Y=\{0\}$ for all $j\notin\Omega$, Definition~\ref{G-col def1}(3) yields the opposite inclusion.
\end{proof}

\begin{lemma}\label{Lemma2}
If $Y$ is a closed $G$-invariant space in $\mathscr{X}$, then $Y\cap C(X)$ is dense in $Y$.
\end{lemma}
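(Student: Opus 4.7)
The case $\mathscr{X} = C(X)$ is immediate, so assume $\mathscr{X} = L^p(\mu)$ with $1 \leq p < \infty$ and fix $f \in Y$. My plan is to convolve $f$ against bumps on $G$ to produce continuous members of $Y$ that approximate $f$ in the $L^p$-norm. Let $d\alpha$ denote the normalized Haar measure on $G$, which is bi-invariant because $G$ is compact. For $h \in C(G)$, define
\[
(T_h f)(x) = \int_G h(\alpha)\, f(\varphi_\alpha x)\, d\alpha.
\]
I would establish three claims: (i) $T_h f \in C(X)$; (ii) $T_h f \in Y$; (iii) $T_{h_n} f \to f$ in $L^p$ for a suitable sequence $\{h_n\}$.

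For (i), fix $x_0 \in X$ and parametrize nearby points as $x = \varphi_\beta x_0$ by transitivity. The substitution $\gamma = \alpha\beta$ together with invariance of Haar measure gives $(T_h f)(\varphi_\beta x_0) = \int_G h(\gamma\beta^{-1})\, f(\varphi_\gamma x_0)\, d\gamma$. Since $h$ is uniformly continuous on the compact group $G$, $\sup_\gamma |h(\gamma\beta^{-1}) - h(\gamma)| \to 0$ as $\beta \to e$, and combined with the pushforward identity $\int_G |f(\varphi_\gamma x_0)|\, d\gamma = \int_X |f|\, d\mu$ (a consequence of the uniqueness of the $G$-invariant probability measure, since the left side is a $G$-invariant positive unit-mass functional on $C(X)$), this yields continuity at $x_0$. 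For (ii), the map $\alpha \mapsto f \circ \varphi_\alpha$ is continuous from $G$ into $L^p(\mu)$ (the standard strong continuity of translation), so $T_h f$ coincides as an $L^p$ class with the Bochner integral $\int_G h(\alpha)(f \circ \varphi_\alpha)\, d\alpha$. Any bounded linear functional $\Lambda$ on $L^p(\mu)$ vanishing on $Y$ satisfies $\Lambda(f \circ \varphi_\alpha) = 0$ for all $\alpha$ by $G$-invariance of $Y$, so $\Lambda(T_h f) = 0$; the Hahn-Banach theorem then gives $T_h f \in Y$.

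For (iii), select $\{h_n\} \subset C(G)$ with $h_n \geq 0$, $\int_G h_n\, d\alpha = 1$, and $\mathrm{supp}(h_n)$ shrinking to $\{e\}$. Then
\[
T_{h_n} f - f = \int_G h_n(\alpha)\,(f \circ \varphi_\alpha - f)\, d\alpha,
\]
so Minkowski's inequality gives $\|T_{h_n} f - f\|_{L^p} \leq \sup_{\alpha \in \mathrm{supp}(h_n)} \|f \circ \varphi_\alpha - f\|_{L^p} \to 0$, producing the desired sequence $\{T_{h_n} f\} \subset Y \cap C(X)$ converging to $f$ in $\mathscr{X}$. The main obstacle is the strong continuity of translation invoked in (ii) and (iii): it is verified first for $f \in C(X)$ using uniform continuity of $(\alpha,x) \mapsto f(\varphi_\alpha x)$ on the compact product $G \times X$, and then extended to general $f \in L^p(\mu)$ by density of $C(X)$. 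This is precisely where the hypothesis $p < \infty$ is essential and explains why $L^\infty(\mu)$ must be excluded from the lemma.
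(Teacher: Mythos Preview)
Your proof is correct and follows essentially the same approach as the paper: convolve $f$ against an approximate identity $\{h_n\}$ on $G$ to produce continuous elements of $Y$ converging to $f$. The paper's proof is terser---it cites the preceding lemmas for the strong continuity of $\alpha\mapsto f\circ\varphi_\alpha$, asserts $g\in Y$ directly from the continuity of the integrand as a $Y$-valued map, and derives $g\in C(X)$ from the same Haar-substitution you use---while you spell out the Bochner/Hahn--Banach justification for $T_h f\in Y$ and the quotient-map continuity argument more explicitly; but the underlying ideas are identical.
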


\begin{lemma}\label{Lemma3}
If $Y\subset C(X)$, $Y$ is a $G$-invariant space, and some $g\in C(X)$ is not in the uniform closure of $Y$, then $g$ is not in the $L^2$-closure of $Y$. 
\end{lemma}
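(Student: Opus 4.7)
The plan is to prove the contrapositive: if $g \in C(X)$ lies in $\overline{Y}^2$, then $g \in \overline{Y}$. The engine of the argument is convolution on the compact group $G$ against continuous functions. Throughout, I use that each $H_i$, arising as an isotypic component for some irreducible representation of $G$ via Peter--Weyl, is finite-dimensional, and that the projection $\pi_i : L^2(\mu) \to H_i$ is realized by convolution against a specific trigonometric polynomial $\psi_i \in C(G)$ (essentially a matrix coefficient of the associated representation).

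First, apply Theorem~\ref{L2 result} to the closed $G$-invariant subspace $\overline{Y}^2$ of $L^2(\mu)$: this gives $\overline{Y}^2 = \overline{E_\Omega}^2$ with $\Omega = \{i \in I : \pi_i Y \neq \{0\}\}$, and by $G$-minimality together with the finite-dimensionality of $H_i$ we obtain $\pi_i Y = H_i$ for every $i \in \Omega$. The key step is to upgrade this to $H_i \subset \overline{Y}$ (not merely $H_i \subset \overline{Y}^2$). For $\psi \in C(G)$ and $f \in C(X)$, the convolution
\[
(\psi * f)(x) = \int_G \psi(\alpha)\, f(\alpha^{-1} x)\, d\alpha
\]
is a uniform limit of Riemann sums of $G$-translates of $f$, by uniform continuity of $\alpha \mapsto f \circ \varphi_{\alpha^{-1}}$ on the compact group $G$; when $f \in Y$, each of these $G$-translates lies in $Y$, so $\psi * f$ lies in $\overline{Y}$. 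Given $h \in H_i$ and $f \in Y$ with $\pi_i f = h$, taking $\psi = \psi_i$ yields $h = \psi_i * f \in \overline{Y}$, so that $E_\Omega \subset \overline{Y}$.

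To finish, pick an approximate identity $\{\phi_n\} \subset C(G)$ (nonnegative, with integral $1$, supports shrinking to the identity); uniform continuity of $g$ on the compact space $X$ gives $\phi_n * g \to g$ uniformly. By the Peter--Weyl theorem the trigonometric polynomials are uniformly dense in $C(G)$, so each $\phi_n$ may be replaced by a trigonometric polynomial $\phi_n'$ with $\|\phi_n - \phi_n'\|_\infty < 1/n$; then $\phi_n' * g \to g$ uniformly as well. Each $\phi_n' * g$ lies in a finite-dimensional sum $E_{F_n}$ of the $H_i$'s (since convolution against each constituent matrix coefficient of $\phi_n'$ maps $L^2(\mu)$ into a single $H_i$) and also in $\overline{Y}^2$ (since $g \in \overline{Y}^2$ and the convolution is an $\overline{Y}^2$-valued average of $G$-translates of $g$). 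Hence $\phi_n' * g \in E_{F_n} \cap \overline{E_\Omega}^2 = E_{F_n \cap \Omega} \subset \overline{Y}$, and uniform closedness of $\overline{Y}$ yields $g \in \overline{Y}$. The main obstacle will be setting up the Peter--Weyl convolution apparatus precisely enough to justify the two identifications needed: that $\psi_i * \,\cdot\, = \pi_i$ on $L^2(\mu)$, and that convolution with a trigonometric polynomial on $G$ lands in the finite-dimensional sum of $H_i$'s corresponding to the representations appearing in it.
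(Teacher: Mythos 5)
Your route is genuinely different from the paper's, but as written it has a real gap. The whole argument is carried by the two identifications you defer to the end as ``the main obstacle'': that $\pi_i$ is convolution against a continuous function $\psi_i$ on $G$, and that convolution with a trigonometric polynomial maps $L^2(\mu)$ into a finite direct sum of the $H_i$. Neither is available from the paper's definition of a $G$-collection, which is axiomatic: the $H_i$ are only assumed to be closed, $G$-minimal, pairwise orthogonal, with dense direct sum, and to satisfy $(*)$. To get your identifications you must first prove that each $H_i$ is a finite-dimensional irreducible $G$-module and --- crucially --- that distinct $H_i$ are pairwise inequivalent (multiplicity one). Without multiplicity one the projection onto a single irreducible summand inside an isotypic component is not a convolution operator at all: operators of the form $f\mapsto\int_G\psi(\alpha)\,f\circ\varphi_\alpha\,dm(\alpha)$ restrict on an isotypic component $V^{\oplus m}$ to operators of the form $A\otimes I_m$, which can never isolate a single copy of $V$ when $m>1$. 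Pairwise inequivalence does in fact follow from the paper's theorem on maps $T:H_i\to H_j$ commuting with $G$ (which rests on property $(*)$), and finite-dimensionality of isotypic components follows from Frobenius reciprocity since $X$ is a homogeneous space of the compact group $G$; so the gaps are fillable, but they are the substance of the proof rather than a technicality, and your write-up does not fill them. A smaller point: $\phi_n*g\to g$ uniformly needs Lemma \ref{Lemma0} (continuity of $\alpha\mapsto g\circ\varphi_\alpha$ into $C(X)$, which uses joint continuity of the action), not merely uniform continuity of $g$ on $X$.

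For comparison, the paper's proof is a short duality argument that never touches $\mathscr{G}$, Peter--Weyl, or Theorem \ref{L2 result}. Since $g\notin\overline{Y}$, Hahn--Banach and the Riesz representation theorem give $\mu'\in M(X)$ annihilating $Y$ with $\int g\,d\mu'=1$. One then mollifies: choose a continuous $\psi\geq0$ with $\int_G\psi\,dm=1$ supported in a neighborhood $N$ of the identity chosen (via Lemma \ref{Lemma0}) so that $\mathrm{Re}\int g\circ\varphi_\alpha\,d\mu'>\tfrac{1}{2}$ for $\alpha\in N$, and set $\Lambda h=\int_X\int_G\psi(\alpha)h(\alpha x)\,dm(\alpha)\,d\mu'(x)$. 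Cauchy--Schwarz together with the $G$-invariance of $\mu$ gives $|\Lambda h|\leq\|\mu'\|\,\|\psi\|_2\,\|h\|_2$, so $\Lambda$ extends to a bounded functional on $L^2(\mu)$; Fubini and the $G$-invariance of $Y$ show it vanishes on $Y$, while $\mathrm{Re}\,\Lambda g\geq\tfrac{1}{2}$. Hence $g\notin\overline{Y}^2$. That argument is shorter, is self-contained, and avoids the representation-theoretic machinery your approach requires.
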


\begin{proof}[Proof of Theorem \ref{main result}]
If $Y$ is a closed $G$-invariant subspace of $\mathscr{X}$, define $\tilde{Y}$ to be the $L^2$-closure of $Y\cap C(X)$. Lemma \ref{Lemma3} then yields $$\tilde{Y}\cap C(X)=Y\cap C(X).$$ We next observe that $Y\cap C(X)$ is $L^2$-dense in $\tilde{Y}$ and is $\mathscr{X}$-dense in $Y$, by Lemma \ref{Lemma2}. Each $\pi_i$ is $\mathscr{X}$-continuous as well as $L^2$-continuous, so that $\pi_i Y=\{0\}$ if and only if $\pi_i\tilde{Y}=\{0\}$. By Theorem \ref{L2 result}, $\tilde{Y}$ is the $L^2$-closure of $E_\Omega$, where $\Omega$ is the set of all $i\in I$ such that $\pi_i Y\neq\{0\}$. Another application of Lemma \ref{Lemma3} yields $$\tilde{Y}\cap C(X)=\overline{E}_\Omega.$$ The $\mathscr{X}$-density of $Y\cap C(X)$ in $Y$ then implies $Y$ is the $\mathscr{X}$-closure of $E_\Omega$.
\end{proof}

\begin{example}
We now further explore the situation that was set up in Example \ref{ex part1}. Recall that $X=G=T^n$, the torus in $\C^n$, such that the action of $G$ on $X$ is given by coordinatewise multiplication. This action is both transitive and continuous, and the measure induced by the action is the usual Lebesgue measure $m$, normalized so that $m(T^n)=1$.

Further, $H_k$ is the space of all complex functions $f$ on $T^n$ given by $f(z)=cz^k$, where $c\in\C$ and $z^k=z_1^{k_1} z_2^{k_2}\ldots z_n^{k_n}$ for  $k=(k_1,k_2,\ldots,k_n)\in\Z^n$; that is, $H_k$ is the span of the trigonometric monomial of power $k$.

The collection $\mathscr{G}$ of spaces $H_k$ forms a $G$-collection: Each $H_k$ is a closed $G$-invariant space of dimension 1, thus is $G$-minimal. Further, $$\int_{T^n}z^k\bar{z}^{k'}\,dm(z)=\begin{cases} 1 & \text{if }k=k' \\ 0 & \text{if }k\neq k', \end{cases}$$ so that the spaces $H_k$ are pairwise orthogonal. Finally, $L^2(T^n)$ is the direct sum of the spaces $H_k$ as a consequence of the Stone-Weierstrass theorem (presented in \cite{RFA} as a special case of Bishop's Theorem, Theorem 5.7). Thus, $\mathscr{G}$ satisfies the first three properties of Definition \ref{G-col def1}. Lastly, for $z\in T^n$, we have $\dim (H(z)\cap H_k)=1$ from Theorem \ref{proj kernels} and the fact that $\dim H_k=1$. Thus, $\mathscr{G}$ is a $G$-collection.

Theorem \ref{main result} then yields that every closed $G$-invariant space of continuous or $L^p$ functions on $T^n$ is the closure of the direct sum of some collection of spaces $H_k$. Notably, the collection which induces the space of all functions which are restrictions to $T^n$ of functions holomorphic on the polydisc and continuous on the closed polydisc is the collection of all $H_k$ such that $k$ has nonnegative coordinates.
\end{example}

\section{Proofs of Lemma \ref{Lemma2} and Lemma \ref{Lemma3}}\label{lemmas}


As in Section \ref{Main Result Section}, we let $\mathscr{X}$ denote any of the spaces $C(X)$ or $L^p(\mu)$, for $1\leq p<\infty$. The proofs of Lemma \ref{Lemma2} and Lemma \ref{Lemma3} (given at the end of the section) require Lemma \ref{Lemma0} and Lemma \ref{Lemma1}, which we now prove.

\begin{lemma}\label{Lemma0}
If $f\in C(X)$, then the map $\alpha\mapsto f\circ\varphi_\alpha$ is a continuous map of $G$ into $C(X)$.
\end{lemma}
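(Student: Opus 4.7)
The plan is to prove continuity of the map $\Psi: G \to C(X)$, $\alpha \mapsto f \circ \varphi_\alpha$, at an arbitrary point $\alpha_0 \in G$. Unfolding the definition, given $\epsilon > 0$ I need to produce a neighborhood $U$ of $\alpha_0$ in $G$ such that
\[
\sup_{x \in X} |f(\varphi_\alpha(x)) - f(\varphi_{\alpha_0}(x))| < \epsilon \qquad (\alpha \in U).
\]
The essential input is that the action is \emph{jointly} continuous, so the evaluation map $\Phi: G \times X \to X$, $(\alpha,x) \mapsto \varphi_\alpha(x)$, is continuous, and therefore so is $F := f \circ \Phi : G \times X \to \mathbb{C}$.

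Next I would run a standard tube-lemma style compactness argument. For each $x \in X$, the continuity of $F$ at $(\alpha_0, x)$ supplies an open neighborhood $U_x \times V_x$ of $(\alpha_0,x)$ in $G \times X$ on which $|F(\alpha,y) - F(\alpha_0,x)| < \epsilon/2$. The sets $\{V_x\}_{x \in X}$ cover $X$, so by compactness of $X$ there are finitely many points $x_1,\ldots,x_n$ with $X = V_{x_1} \cup \cdots \cup V_{x_n}$. Setting $U := \bigcap_{j=1}^n U_{x_j}$ gives an open neighborhood of $\alpha_0$.

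Now for any $x \in X$, pick $j$ with $x \in V_{x_j}$. For $\alpha \in U$, both $(\alpha,x)$ and $(\alpha_0,x)$ lie in $U_{x_j} \times V_{x_j}$, so
\[
|F(\alpha,x) - F(\alpha_0,x)| \le |F(\alpha,x) - F(\alpha_0,x_j)| + |F(\alpha_0,x_j) - F(\alpha_0,x)| < \epsilon/2 + \epsilon/2 = \epsilon.
\]
Since this bound is independent of $x$, taking the sup over $x$ gives $\|\Psi(\alpha) - \Psi(\alpha_0)\|_\infty \le \epsilon$, proving continuity at $\alpha_0$.

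The only real obstacle is converting the pointwise continuity of $\Phi$ into the uniformity over $x \in X$ required by the sup-norm topology on $C(X)$; this is precisely what the compactness of $X$ delivers via the finite subcovering above. (Equivalently, one could observe that $G \times X$ is compact Hausdorff, so $F$ is uniformly continuous with respect to the unique compatible uniformity, and extract the same conclusion; I prefer the direct open-cover version since it avoids invoking uniform structures.)
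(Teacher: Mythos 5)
Your proof is correct, but it takes a genuinely different route from the paper's. The paper factors the map $\alpha\mapsto f\circ\varphi_\alpha$ as a composition $T_f\circ\phi$, where $\phi:G\to C(X,X)$, $\phi(\alpha)=\varphi_\alpha$, is continuous for the compact-open topology by the exponential law (Theorem 46.11 of Munkres, which is where the joint continuity of the action enters), and $T_f:C(X,X)\to C(X)$, $\varphi\mapsto f\circ\varphi$, is shown by a direct subbasis computation to be continuous for the compact-open topologies; the argument closes by noting that on $C(X)$ the compact-open and uniform topologies coincide because $X$ is compact. You instead work directly with $F=f\circ\Phi:G\times X\to\C$ and run a tube-lemma argument at a fixed $\alpha_0$: the finite subcover of $X$ converts pointwise continuity of $F$ into the uniformity in $x$ that the sup-norm demands, and your triangle-inequality step (comparing both $F(\alpha,x)$ and $F(\alpha_0,x)$ to the anchor value $F(\alpha_0,x_j)$) is carried out correctly. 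Both arguments rest on the same two inputs --- joint continuity of the action and compactness of $X$ --- but yours is more elementary and self-contained, avoiding the compact-open topology machinery entirely, while the paper's is more structural and delegates the key step to a cited standard theorem. Either is acceptable; your version has the minor advantage of making explicit exactly where compactness is used.
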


\begin{proof}
For $\alpha\in G$, we define the map $\phi:G\to C(X,X)$ by $\phi(\alpha)=\varphi_\alpha$. 
Then $\phi$ is continuous when $C(X,X)$ is given the compact-open topology (Theorem 46.11 of \cite{MUNK}). We note that the continuity of the group action is used here.

We define the map $T_f:C(X,X)\to C(X)$ for $f\in C(X)$ by $T_f(\varphi)=f\circ\varphi$, and we endow both spaces with the compact-open topology. Let $f\circ\varphi\in C(X)$ for $\varphi\in C(X,X)$ and suppose $f\circ\varphi\in V$, where $V=V(K,U)$ is a subbasis element in $C(X)$. Explicitly, $$K\subset (f\circ\varphi)^{-1}(U). \hspace{.2in}\text{ That is to say, }\hspace{.2in} K\subset \varphi^{-1}(f^{-1}(U)).$$

Then $V'=V'(K,f^{-1}(U))$ is a subbasis element in $C(X,X)$ and $\varphi\in V'$. Further, $V'\subset T_f^{-1}(V)$, so that $T_f$ is continuous when $C(X,X)$ and $C(X)$ are endowed with the respective compact-open topologies. We finally observe that since $X$ is compact, the norm topology and the compact-open topology on $C(X)$ coincide.
\end{proof}

\begin{lemma}\label{Lemma1}
If $f\in\mathscr{X}$, then the map $\alpha\mapsto f\circ\varphi_\alpha$ is a continuous map of $G$ into $\mathscr{X}$.
\end{lemma}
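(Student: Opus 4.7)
My plan is to reduce the case $\mathscr{X}=L^p(\mu)$ to the continuous case handled by Lemma \ref{Lemma0}, using density of $C(X)$ in $L^p(\mu)$ together with the isometry property of the translation operators $f\mapsto f\circ\varphi_\alpha$ on $L^p(\mu)$. When $\mathscr{X}=C(X)$, there is nothing new to prove: the statement is exactly Lemma \ref{Lemma0}. So the work is for $1\le p<\infty$.

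The key ingredients I would assemble first are: (i) since $\mu$ is a regular Borel probability measure on the compact Hausdorff space $X$, the space $C(X)$ is norm-dense in $L^p(\mu)$ for every $1\le p<\infty$; (ii) by the $G$-invariance of $\mu$ (the remark after \eqref{measinv}), $\|h\circ\varphi_\alpha\|_{L^p}=\|h\|_{L^p}$ for every $h\in L^p(\mu)$ and every $\alpha\in G$; and (iii) for $h\in C(X)$ we have $\|h\|_{L^p}\le\|h\|_\infty$ since $\mu(X)=1$, so the inclusion $C(X)\hookrightarrow L^p(\mu)$ is continuous. Combining (iii) with Lemma \ref{Lemma0} shows that for any $g\in C(X)$, the map $\alpha\mapsto g\circ\varphi_\alpha$ is continuous from $G$ into $L^p(\mu)$.

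Now fix $f\in L^p(\mu)$, $\alpha_0\in G$, and $\varepsilon>0$. Choose $g\in C(X)$ with $\|f-g\|_{L^p}<\varepsilon/3$. By the isometry property (ii),
\begin{equation*}
\|f\circ\varphi_\alpha-g\circ\varphi_\alpha\|_{L^p}=\|f-g\|_{L^p}<\varepsilon/3\qquad\text{for every }\alpha\in G.
\end{equation*}
By the previous paragraph applied to $g$, there is a neighborhood $U$ of $\alpha_0$ in $G$ such that $\|g\circ\varphi_\alpha-g\circ\varphi_{\alpha_0}\|_{L^p}<\varepsilon/3$ for all $\alpha\in U$. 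The triangle inequality then gives $\|f\circ\varphi_\alpha-f\circ\varphi_{\alpha_0}\|_{L^p}<\varepsilon$ for all $\alpha\in U$, which is the desired continuity.

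I do not anticipate a serious obstacle; the proof is a standard $\varepsilon/3$ argument. The only point that requires a little care is the reduction from arbitrary $L^p$ functions to continuous ones, and this is handled cleanly by the fact that $G$ acts isometrically on $L^p(\mu)$, which is precisely where the invariance \eqref{measinv} of $\mu$ enters. Density of $C(X)$ in $L^p(\mu)$ is a classical consequence of the regularity of $\mu$, and I would simply cite it.
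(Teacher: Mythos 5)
Your proof is correct and follows essentially the same route as the paper's: approximate $f$ by a continuous $g$, invoke Lemma \ref{Lemma0} for $g$, and control the remaining terms via the $L^p$-isometry coming from the invariance of $\mu$. The only cosmetic difference is that you verify continuity at an arbitrary $\alpha_0$ while the paper writes the $\varepsilon/3$ estimate at the identity; the substance is identical.
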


\begin{proof}
For brevity, we let $\n{\cdot}$ denote the norm of the space $\mathscr{X}$ and $\n{\cdot}_\infty$ the uniform norm in $C(X)$. If $\epsilon>0$, then $\n{f-g}<\epsilon/3$ for some $g\in C(X)$. There is a neighborhood $N$ of the identity in $G$ such that $\n{g-g\circ\varphi_\alpha}_\infty<\epsilon/3$ for all $\alpha\in N$ (Lemma \ref{Lemma0}). Since $$|f-f\circ\varphi_\alpha|\leq|f-g|+|g-g\circ\varphi_\alpha|+|(g-f)\circ\varphi_\alpha|,$$ we have $\n{f-f\circ\varphi_\alpha}<\epsilon$ for all $\alpha\in N$.
\end{proof}












\begin{proof}[Proof of Lemma \ref{Lemma2}]
Let $f\in Y$ and choose $N$ as in the proof of Lemma \ref{Lemma1}. Let $\psi:G\to[0,\infty)$ be continuous, with support in $N$, such that $\int\psi\,dm=1$, where $m$ denotes the Haar measure on $G$. Define $$g(x)=\int_G\psi(\alpha)f(\alpha x)\,dm(\alpha).$$ Since $\alpha\mapsto\psi(\alpha)f\circ\varphi_\alpha$ is a continuous map into $Y$, we have $g\in Y$. If $\beta\in G$ such that $\beta x_1=x$, the invariance of the Haar measure yields $$g(x)=\int_G\psi(\alpha\beta^{-1})f(\alpha x_1)\,dm(\alpha).$$ Thus, $g\in Y\cap C(X)$.

Finally, the relation $$f-g=\int_N\psi(\alpha)(f-f\circ\varphi_\alpha)\,dm(\alpha)$$ gives $\n{f-g}<\epsilon$, since $\n{f-f\circ\varphi_\alpha}<\epsilon$ whenever $\alpha\in N$.
\end{proof}

\begin{proof}[Proof of Lemma \ref{Lemma3}]
There is a $\mu'\in M(X)$ such that $\int f\,d\mu'=0$ for all $f\in Y$, but $\int g\,d\mu'=1$. There is a neighborhood $N$ of the identity in $G$ such that $\text{Re}\int g\circ\varphi_\alpha\,d\mu'>\frac{1}{2}$ for every $\alpha\in N$. Associate $\psi$ to $N$ as in the proof of Lemma \ref{Lemma2}, and define $\Lambda\in C(X)^*$ by $$\Lambda h=\int_X\int_G\psi(\alpha)h(\alpha x)\,dm(\alpha)\,d\mu'(x).$$ By the Schwarz inequality, $$\Big|\int_G\psi(\alpha)h(\alpha x)\,dm(\alpha)\Big|^2\leq \int_G|\psi(\alpha)|^2\,dm(\alpha)\int_G|h(\alpha x)|^2\,dm(\alpha)=\n{\psi}_2^2\int_X|h|^2\,d\mu,$$ so that $$|\Lambda h|\leq\n{\mu'}\text{ }\n{\psi}_2\n{h}_2.$$ Thus, $\Lambda$ extends to a bounded linear functional $\Lambda_1$ on $L^2(\mu)$. By interchanging integrals in the definition of $\Lambda$, we get $\Lambda_1 f=0$ for every $f\in Y$, but $\text{Re }\Lambda_1 g\geq\frac{1}{2}$. Thus, $g\notin\overline{Y}^2$.
\end{proof}

\section{Future Questions}

\begin{itemize}
    \item[(1)] Does a $G$-collection exist for all groups $G$ acting continuously and transitively on $X$? What about a collection that only lacks ($*$)? What conditions might exist on $G$ or $X$ that yield a collection that only lacks ($*$)?
    \item[(2)] Under what conditions can the restrictions on $X$, $G$, and the action of $G$ on $X$ be loosened? Can the compactness of $X$ and $G$ be substituted with local compactness? Can the continuity of the action be substituted with separate continuity?
    \item[(3)] Suppose $H$ is a subgroup of $G$ and $\mathscr{H}$ is a collection of closed $H$-minimal spaces satisfying the same conditions as $\mathscr{G}$. What is the relationship between $\mathscr{H}$ and $\mathscr{G}$? The uniqueness of $\mu$ shows that the $H$-measure is the same as the $G$-measure, and further, $G$-invariance implies $H$-invariance (of a space).
    
    We note that (3) is prompted from the study of $\M$-invariant and $\U$-invariant spaces of continuous functions on the unit sphere of $\mathbb{C}^n$ from \cite{NR}, in which it is shown that there are infinitely many $\U$-invariant spaces and only six $\M$-invariant spaces. These six $\M$-invariant spaces are found by combining the $\U$-minimal spaces in a specific way (see Lemma 13.1.2 of \cite{RFT}), and we are curious if this method can be generalized.
    \item[(4)] Can the results of \cite{hokamp2021certain} similarly be generalized? That is, can a $G$-collection similarly characterize all weak*-closed $G$-invariant subspaces of $L^\infty(\mu)$?
    \item[(5)] Under what conditions can a $G$-collection characterize the closed $G$-invariant \textit{algebras} of continuous functions? We note that the case for the unitary group acting on the unit sphere of $\mathbb{C}^n$ is discussed in \cite{RUA} and is also summarized in \cite{RFT}.
\end{itemize}

\section{Data Availability Statement}

Data sharing not applicable to this article as no datasets were generated or analysed during the current study.

\bibliographystyle{unsrt}
\bibliography{bibliography}

\end{document}